\newtheorem{theorem}{Theorem}[section]
\newtheorem{proposition}[theorem]{Proposition}
\newtheorem{corollary}[theorem]{Corollary}
\theoremstyle{definition}
\newtheorem{definition}[theorem]{Definition}
\theoremstyle{remark}
\newtheorem{remark}[theorem]{Remark}
\newcommand{\N}{\mathbb{N}}
\newcommand{\Z}{\mathbb{Z}}
\newcommand{\T}{\ensuremath{\mathbb{T}}}
\newcommand{\Cs}{$\mathrm{C}^*$-al\-ge\-bra}
\newcommand{\cU}{\mathcal{U}}
\DeclareMathOperator{\id}{id}
\begin{document}
\title{Elementary amenable groups are quasidiagonal}
\author{
Narutaka Ozawa \thanks{The first named author was supported by JSPS KAKENHI Grant Number 26400114}\\
RIMS, Kyoto University,\\ 
Sakyo, Kyoto Japan
\and
Mikael R\o rdam \thanks{The second named author was supported  by the Danish National Research Foundation (DNRF) through the Centre for Symmetry and Deformation at University of Copenhagen, and The Danish Council for Independent Research, Natural Sciences.}\\
 University of Copenhagen\\ 
 Copenhagen, Denmark
\and 
Yasuhiko Sato \thanks{The third author was supported by JSPS (the Grant-in-Aid for Research Activity Start-up
25887031) and Department of Mathematical Sciences in University of Copenhagen.}\\
Kyoto University \\
Sakyo, Kyoto, Japan}
\date{}

\maketitle
\begin{abstract} 
We show that the group \Cs{} of any elementary amenable group is quasidiagonal. 
This is an offspring of  recent progress in the classification theory of nuclear \Cs s.
\end{abstract}

\section{Introduction}\label{Sec1}

Rosenberg proved in 1987, \cite{Hadwin}, that if the reduced \Cs{} of a group is \emph{quasidiagonal}, then the group is amenable. 
He suggested that the converse is also true. We confirm this conjecture for \emph{elementary amenable groups}. 

Quasidiagonal \Cs s have been studied since the 1970's. Loosely speaking, a \Cs{} is quasidiagonal if 
it has a faithful approximately block-diagonal representation on a Hilbert space. 
%One of the remarkable features of quasidiagonality is Voiculescu's theorem that it is homotopy invariant. 
Every quasidiagonal \Cs{} is stably finite. There is no known example of a stably finite nuclear \Cs{} which is not quasidiagonal. Rosenberg's conjecture asserts that there is no such example among group \Cs s. Quasidiagonality plays a central role in the classification program for simple nuclear \Cs s and for our understanding of stably finite \Cs s in general, as for example illustrated in the recent paper, \cite{MS}. We refer the reader to the nice exposition, \cite{Brown:QD}, by N.\ Brown for more information about quasidiagonal \Cs s. 

Rosenberg's conjecture was studied in the recent paper \cite{CarDadEck}, where it is shown that the group \Cs{} of an amenable group is quasidiagonal if and only if the group is MF. This, in turn, allowed the authors to conclude for example that amenable LEF groups have quasidiagonal \Cs s. (LEF stands for locally embeddable into finite groups.) They also proved that the group \Cs{} of an amenable group is not necessarily \emph{strongly quasidiagonal}. For example the lamplighter group, and more generally, a class of wreath products, fail to have this property.

The class of elementary amenable groups is a bootstrap class which is built up from finite and abelian groups by successive elementary operations. All elementary amenable groups are amenable (as the name suggests), but the converse does not hold: groups of intermediate growth discovered by R.\ Grigorchuck, \cite{Grigorchuk}, are amenable but not elementary amenable. Grigorchuck's groups are residually finite (and amenable) and therefore have quasidiagonal \Cs s. The commutator group of the topological full group of a Cantor minimal system was shown by H.\ Matui, \cite{Mat-2006}, to be simple and (sometimes) finitely generated, and by K.\ Juschenko and N.\ Monod, \cite{JusMonod}, to be amenable. No infinite, finitely generated simple group can be elementary amenable. It was observed in \cite{CarDadEck,Kerr:Barcelona} that the topological full groups, being LEF, \cite{GM}, have quasidiagonal \Cs s. The list of amenable groups which are not elementary amenable is long and does not stop here. 

Elementary amenable groups can be described more explicitly by transfinite induction, cf.\ Chou, \cite{Chou}. We use here a related description of elementary amenable groups due to Osin (see Proposition~\ref{Osin}). To show that all elementary amenable groups have quasidiagonal \Cs{}s, one is  faced with the following problem (see also Remark~\ref{remark}): if $G$ is the semi-direct product of a group $H$ by the integers $\Z$, and if $C^*_\lambda(H)$ is quasidiagonal, does it follow that $C^*_\lambda(G)$ is quasidiagonal?  We reformulate this question by considering the formally stronger property, that we call $\mathrm{PQ}$, of a group $G$: that the crossed product of $G$ by the Bernoulli shift on the  CAR algebra $\bigotimes_G M_2$ is quasidiagonal, cf.\ Definition~\ref{def:PQ}. Using classification theory, specifically Theorem~\ref{Classification} from \cite{MS}, as well as a result of H.\ Matui on AF-embeddabilty of crossed products of a simple A\T-algebra of real rank zero by integers, we can show that the class $\mathrm{PQ}$ is closed under extensions by the integers $\Z$ (as well as the other operations as required by Osin's theorem). In this way we obtain our main result, Theorem~\ref{maintheorem}. We show in Proposition~\ref{LEPQ} that countable amenable LEF groups, and in fact all countable amenable groups that are locally embeddable into a PQ group,  belong to the class PQ. This implies that LEF groups and any extension of such a group by an elementary amenable group have quasidiagonal \Cs s thus extending the result from \cite{CarDadEck} mentioned above.

\section{Bernoulli action of UHF-algebras}\label{Sec2}

\noindent 
In order to use classification theory of \Cs s to obtain results about group \Cs s, we associate 
to each countable discrete group $G$ a simple and monotracial 
$\mathrm{C}^*$-alge\-bra $B(G)$  by using the Bernoulli shift. 
Here a \Cs{} is said to be \emph{monotracial} if it has a unique tracial state. 

We recall the definition of the Bernoulli shift. 
Let $A$ be a unital $\mathrm{C}^*$-alge\-bra. For a finite set $X$, 
we consider the $X$-fold tensor product $\bigotimes_X A$. 
Throughout the paper the symbol $\otimes$ means the \emph{minimal tensor product} 
of $\mathrm{C}^*$-alge\-bras. 
For an inclusion $Y\subseteq X$ of finite sets, the $\mathrm{C}^*$-alge\-bra 
$\bigotimes_Y A$ is naturally identified with the $\mathrm{C}^*$-sub\-alge\-bra 
$(\bigotimes_Y A)\otimes(\bigotimes_{X\setminus Y}{\mathbb C}1)$ 
of $\bigotimes_X A$. 
For an infinite set $X$, we define $\bigotimes_X A$ to be the inductive 
limit of $\{ \bigotimes_Y A : Y\subseteq X\mbox{ finite}\}$. Thus it is the 
closed linear span of the elementary tensors
$\bigotimes_{x\in X} a_x$, where $a_x \in A$ for all $x \in X$ and $a_x = 1$ for all but finitely 
many $x \in X$. 
It follows that for any inclusion $Y\subseteq X$ of sets, 
there is a natural embedding $\bigotimes_Y A \subseteq \bigotimes_X A$.

When a group $G$ acts on the index set $X$ by permutations, it gives 
rise to an action of $G$ on $\bigotimes_X A$ by $*$-auto\-mor\-phisms. 
This action is called a (noncommutative) \emph{Bernoulli shift} and will be denoted by $\sigma$. 
In other words, $\sigma$ is defined by 
$\sigma_g(\bigotimes_{x\in X} a_x) = \bigotimes_{x\in X} a_{g^{-1}\cdot x}$ 
for an elementary tensor  $a=\bigotimes_{x\in X} a_x \in \bigotimes_X A$.
We denote by $(\bigotimes_X A)\rtimes_\sigma G$ the corresponding crossed product. 
Throughout the paper the symbol $\rtimes$ means the \emph{reduced crossed product}. 
We omit writing $\sigma$ when there is no fear of confusion. 

Let $M_n$ denote the $\mathrm{C}^*$-alge\-bra of $n\times n$ matrices over $\mathbb C$, 
and let $B$ denote the CAR algebra $\bigotimes_{{\mathbb N}} M_2$. For each countable discrete group $G$, 
we associate the \emph{Bernoulli shift crossed product} $\mathrm{C}^*$-alge\-bra   $B(G) = (\bigotimes_{G} B)\rtimes G$, where $G$ acts on itself by the left translation. 

\begin{proposition}\label{functoriality}
For every countable discrete group $G$, the $\mathrm{C}^*$-alge\-bra $B(G)$ 
is simple and monotracial. If $G$ is amenable, then $B(G)$ is nuclear and belongs to the UCT class. 

Moreover, the functor $G\mapsto B(G)$ satisfies the following functorial properties. 
\begin{enumerate}
\item\label{item:incl}
If $G_1\subseteq G_2$, then  $B(G_1)\subseteq B(G_2)$ naturally. 
\item\label{item:union}
If $G_1\subseteq G_2\subseteq\cdots$ is an increasing sequence, 
then $B(\bigcup G_n)=\overline{\bigcup B(G_n)}$. 
\item\label{item:sdp}
If $H$ acts on $G$ by automorphisms, then the action extends to an action $\alpha$ of $H$ on $B(G)$ such that 
 $B(G)\rtimes_\alpha H\cong B(G\rtimes H)$.
\item\label{item:fisg}
If $G_1\subseteq G_2$ is a finite-index inclusion, then there is a faithful 
embedding of $B(G_2)$ into  $B(G_1)\otimes M_{|G_2/G_1|}$. 
\end{enumerate}
\end{proposition}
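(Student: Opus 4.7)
The plan is to handle the four claims in turn, using standard facts about UHF algebras, outerness criteria for crossed products, and induced representations.

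For simplicity and monotraciality of $B(G)$, observe that the base algebra $\bigotimes_G B\cong\bigotimes_{G\times\N}M_2$ is a UHF algebra, hence simple with a unique tracial state $\tau$, which is automatically $G$-invariant. The Bernoulli automorphism $\sigma_g$ is outer for every $g\neq e$, as is checked directly by exhibiting non-commuting elements of $B$ placed at the disjoint positions $h$ and $gh$ in the index set $G$. Simplicity of $B(G)$ then follows from Kishimoto's theorem, and unique traciality from the canonical bijection between tracial states on $B(G)$ and $G$-invariant tracial states on $\bigotimes_G B$. When $G$ is amenable, $\bigotimes_G B$ is AF (so nuclear and in the UCT class), the crossed product inherits nuclearity by the standard permanence theorem for amenable actions, and the UCT is preserved by Tu's theorem.

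For (i), the tensor-factor inclusion $\bigotimes_{G_1}B\hookrightarrow\bigotimes_{G_2}B$ is $G_1$-equivariant and admits a $G_1$-equivariant conditional expectation by applying the unique trace of $B$ to each coordinate in $G_2\setminus G_1$, giving the reduced crossed product embedding $\bigotimes_{G_1}B\rtimes_r G_1\hookrightarrow\bigotimes_{G_2}B\rtimes_r G_1$. Composing with the canonical subgroup inclusion $\bigotimes_{G_2}B\rtimes_r G_1\hookrightarrow\bigotimes_{G_2}B\rtimes_r G_2$ yields $B(G_1)\hookrightarrow B(G_2)$. Part (ii) is then the standard continuity of the reduced crossed product under increasing unions of equivariant systems.

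For (iii), identify $\bigotimes_{G\rtimes H}B\cong\bigotimes_G\bigotimes_H B$ via the set bijection $G\rtimes H\cong G\times H$, pick any $*$-isomorphism $\psi:\bigotimes_H B\to B$ (both are UHF of type $2^\infty$), and apply it coordinatewise to obtain a $*$-isomorphism $\bigotimes_{G\rtimes H}B\cong\bigotimes_G B$. Transporting the Bernoulli action of $G\rtimes H$ through this isomorphism endows $\bigotimes_G B$ with a $G\rtimes H$-action whose $G$-part is the Bernoulli shift and whose $H$-part is the desired extension $\alpha$. The iterated crossed product identity for semidirect products then yields $B(G)\rtimes_\alpha H=\bigotimes_G B\rtimes_r(G\rtimes H)\cong\bigotimes_{G\rtimes H}B\rtimes_r(G\rtimes H)=B(G\rtimes H)$. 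The principal obstacle here is that $\alpha$ is \emph{not} simply the factor-permutation of $\bigotimes_G B$ by the automorphism $\phi_h:G\to G$: the chosen $\psi$ carries the $H$-Bernoulli shift on $\bigotimes_H B$ into a non-trivial automorphism of $B$, and this inner shift in each tensor factor must be included in $\alpha$ for the identification to match $B(G\rtimes H)$.

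For (iv), decompose $G_2=\bigsqcup_{i=1}^n G_1 g_i$ with $n=|G_2/G_1|$. The $G_1$-equivariant coset isomorphisms $\bigotimes_{G_1 g_i}B\cong\bigotimes_{G_1}B$ give a $G_1$-equivariant isomorphism $\bigotimes_{G_2}B\cong(\bigotimes_{G_1}B)^{\otimes n}=\bigotimes_{G_1}(B^{\otimes n})$ with $G_1$ acting diagonally; the UHF identification $B^{\otimes n}\cong B$ then produces a $G_1$-equivariant isomorphism $\bigotimes_{G_2}B\cong\bigotimes_{G_1}B$, and consequently $\bigotimes_{G_2}B\rtimes_r G_1\cong B(G_1)$. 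The classical induced-representation embedding $A\rtimes_r G_2\hookrightarrow(A\rtimes_r G_1)\otimes M_n$ for a finite-index inclusion, applied with $A=\bigotimes_{G_2}B$, then yields $B(G_2)\hookrightarrow B(G_1)\otimes M_n$.
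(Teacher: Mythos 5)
Your treatment of the functorial properties (i)--(iv) follows essentially the same route as the paper: the equivariant tensor-factor inclusions for (i) and (ii), the identification $B\cong\bigotimes_H B$ and transport of the left translation action of $G\rtimes H$ on itself for (iii) (including the correct observation that the $H$-part is not a bare factor permutation), and the coset decomposition plus the standard finite-index embedding $C\rtimes G_2\hookrightarrow (C\rtimes G_1)\otimes M_n$ for (iv). Those parts are fine.

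The genuine gap is in the proof that $B(G)$ is monotracial. You assert a ``canonical bijection between tracial states on $B(G)$ and $G$-invariant tracial states on $\bigotimes_G B$.'' No such bijection holds in general: the restriction map from traces on a reduced crossed product to invariant traces on the coefficient algebra is surjective (via $\tau\mapsto\tau\circ\Phi$, with $\Phi$ the canonical conditional expectation) but need not be injective, even for outer actions on simple monotracial $\mathrm{C}^*$-algebras. (The trivial action of $\Z$ on $\C$ gives $C(\T)$ with many traces; more to the point, an outer automorphism of a UHF algebra that becomes inner on the weak closure in the trace representation produces extra traces on the crossed product.) Injectivity is exactly the nontrivial statement: one must show that every tracial state $\tau$ on $B(G)$ satisfies $\tau(au_g)=0$ for all $g\neq 1_G$, i.e.\ that the action is ``outer relative to the trace.'' The paper proves this by choosing, for $a$ supported on a finite set $F$, a copy of $M_{2^n}$ inside $\bigotimes_{\{1_G\}}B$ commuting with $a$, and averaging over its $2^n$ minimal projections $p_j$, using $\tau(p_j\sigma_g(p_j))=2^{-2n}$ to get $|\tau(au_g)|\le 2^{-n}\|a\|$. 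Some argument of this kind (strong outerness of the Bernoulli shift) is indispensable and is missing from your proposal. Relatedly, your outerness check ``by exhibiting non-commuting elements of $B$ placed at the disjoint positions $h$ and $gh$'' does not work as stated, since elements sitting in disjoint tensor factors commute; the standard argument (used in the paper) takes a nontrivial central sequence of projections $p_n$ in $\bigotimes_{\{1_G\}}B$ and notes that $\|\sigma_g(p_n)-p_n\|=1$ while $\|\mathrm{Ad}(u)(p_n)-p_n\|\to 0$ for any fixed unitary $u$. Finally, a minor point: Tu's theorem gives the UCT for $\mathrm{C}^*$-algebras of amenable groupoids, so to invoke it one should realize $B(G)$ as such a groupoid algebra (the paper does this via $X=\prod_{G\times\N}\Z/2\Z$ and $H=\bigoplus_{G\times\N}\Z/2\Z$); ``the UCT is preserved'' is not quite the right formulation.
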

\begin{proof}
It was shown in \cite[Theorem 3.1]{Kishimoto:1981} that $A \rtimes G$ is simple if $A$ is simple 
and the action of $G$ on $A$ is outer. Thus it suffices to show that the Bernoulli 
shift is an outer action. Take a nontrivial central sequence $p_n$ of projections in 
$B$ and view them as elements in $\bigotimes_{\{1_G\}} B \subseteq \bigotimes_G B$. 
Then, it is also a central sequence in $\bigotimes_G B$, and 
$\|\sigma_g(p_n) - p_n \|=1$ for all $g \in G \setminus \{1_G\}$ and $n \in {\mathbb N}$ 
(because $\sigma_g(p_n)$ and $p_n$ are distinct but commuting projections). 
It follows that $\sigma_g$ is not inner for any $g \in G \setminus \{1_G\}$.

To prove that $B(G)$ is monotracial, it suffices to show that $\tau(x) = \tau(\Phi(x))$ 
for every tracial state $\tau$ on $B(G)$ and $x \in B(G)$, where $\Phi$ is the canonical 
conditional expectation from $B(G)$ onto $\bigotimes_G B$. For this, it further 
suffices to show that $\tau(au_g) = 0$ for every $g \in G \setminus \{1_G\}$ and 
$a \in \bigotimes_F {\mathcal B}$ with $F\subseteq G$ finite.
Here ${\mathcal B}$ is the dense 
subalgebra $\bigcup_n (\bigotimes_{i=1}^n M_2)$ of the CAR algebra $B$, and $u_g$ denotes the canonical unitary element in $B(G)$ that 
implements the automorphism $\sigma_g$. 
Thus, for any $n \in {\mathbb N}$, one can find a copy $C$ of $M_{2^n}$ 
in $\bigotimes_{\{1_G\}} B$ that commutes with $a$. 
Let $\{p_j\}_{j=1}^{2^n}$ be pairwise orthogonal minimal projections in $C$ such that $\sum_j p_j = 1$. 
Since $\tau(p_j\sigma_g(p_j)) = 2^{-{2n}}$ for all $j$ (observe that $\tau|_{\bigotimes_G B}$ is unique 
and satisfies $\tau(ee') = \tau(e)\tau(e')$ for every $e \in \bigotimes_{\{1_G\}} B$ 
and $e' \in \bigotimes_{\{g\}} B$), one has
\begin{align*}
\left|\tau (a u_g)\right| \leq \sum_{j=1}^{2^n} \left|\tau(p_jau_g p_j) \right| 
= \sum_{j=1}^{2^n} \left|\tau(p_j\sigma_g(p_j) u_ga)\right| 
\le \sum_{j=1}^{2^n} \|a\| \tau\big(p_j\sigma_g(p_j)\big) \le 2^{-n} \|a\|.
\end{align*}
As $n \ge 1$ was arbitrary, we conclude that $\tau(au_g)=0$ as desired.

It is well-known that the crossed product of a nuclear \Cs{} by an amenable group is again nuclear, 
see for example \cite[Theorem 4.2.6]{BroOza:book}. J.-L.\ Tu proved in \cite{Tu} that the UCT holds 
for the \Cs{} of any locally compact amenable group\-oid. (It is also known that the \Cs{} of 
an \'etale groupoid is nuclear if and only if the groupoid is amenable---hence nuclearity implies 
that the UCT holds for each \Cs{} in this class.)
We show that $B(G)$ is the \Cs{} of an amenable \'etale groupoid. 
Let $X=\prod_{G\times\N} \Z/2\Z$ be the compact group on which the subgroup 
$H=\bigoplus_{G\times\N} \Z/2\Z$ (viewed as a discrete group) acts from the left. 
By using the isomorphism $\ell_\infty(\Z/2\Z)\rtimes(\Z/2\Z)\cong M_2$, one sees 
the canonical isomorphism 
$C(X)\rtimes H\cong \bigotimes_{G\times\N} M_2\cong\bigotimes_G B$.
%(see for example \cite[7.10.8]{Ped}).
The group $G$ also acts on $G\times\N$ on the first coordinate and hence on $X$ and $H$, too. 
These actions are compatible with the $H$ action on $X$ and give rise to 
an action of the semi-direct product group $H\rtimes G$ on $X$. It is routine to check 
\[
C(X) \rtimes (H\rtimes G) \cong (C(X)\rtimes H)\rtimes G \cong ({\textstyle \bigotimes_G B})\rtimes G = B(G).
\]
Therefore, $B(G)$ is isomorphic to the \Cs{} of the amenable \'etale groupoid 
$X \rtimes (H \rtimes G)$ (see for example \cite[Example 5.6.3]{BroOza:book}). 

Now, we proceed to the proof of functoriality. The assertions (\ref{item:incl}) 
and (\ref{item:union}) are rather obvious. 
The $\mathrm{C}^*$-sub\-alge\-bra of $B(G_2)$, generated by 
$\bigotimes_{G_1}B$ and $\{ u_g : g \in G_1\}$, 
is canonically isomorphic to $B(G_1)$; and 
the $\mathrm{C}^*$-alge\-bra $B(\bigcup G_n)$ is generated by 
$\bigcup_n(\bigotimes_{G_n}B)$ and $\{ u_g : g\in\bigcup G_n\}$.

For (\ref{item:sdp}), let us fix the notation. We write a typical element 
in the semi-direct product group $G \rtimes H$ as $gs$ with 
$g\in G$ and $s\in H$. 
The $H$-action on $G$ is denoted by $\alpha$ 
(which is notationally abusive), i.e., $sgs^{-1}=\alpha_s(g)$. 
We observe that $B\cong\bigotimes_{H} B$ and so 
$B(G)\cong (\bigotimes_{G\rtimes H}B)\rtimes G$ 
where $G$ acts on $G\rtimes H$ from the left. 
To avoid a possible confusion, we denote the latter $\mathrm{C}^*$-alge\-bra 
by $\tilde{B}(G)$. 
The group $H$ also acts on $G\rtimes H$ 
from the left: $s(ht)=\alpha_s(h)st$, and we denote by $\alpha$ again the 
corresponding Bernoulli shift action on $\bigotimes_{G\rtimes H}B$. 
Since, for each $s\in H$, the pair 
$\bigotimes_{G\rtimes H}B\ni b\mapsto\alpha_s(b)$ 
and $G\ni g\mapsto u_{\alpha_s(g)}$ is a covariant representation 
in $\tilde{B}(G)$, the map $\alpha$ extends to an action of $H$ 
on $\tilde{B}(G)$. It is not difficult to see 
\[
\tilde{B}(G)\rtimes_\alpha H 
 = \big(({\textstyle \bigotimes_{G\rtimes H} B})\rtimes G\big)\rtimes H
 \cong ({\textstyle \bigotimes_{G\rtimes H} B})\rtimes(G\rtimes H)
 =B(G\rtimes H).
\]

For (\ref{item:fisg}), we use the isomorphisms $B \cong \bigotimes_{G_2/G_1} B$ and
$B(G_1)\cong (\bigotimes_{G_2}B)\rtimes G_1$. 
Hence it suffices to show when $G_2$ acts on a unital \Cs{} $C$, and $G_1$ is a finite-index subgroup of $G_2$, 
then $C\rtimes G_2\subseteq (C\rtimes G_1)\otimes M_{|G_2/G_1|}$. 
Although this is well-known, we sketch a proof for the convenience of the reader.

Put $n = |G_2/G_1|$. Let $\pi$ be a faithful representation of $C$ on a Hilbert space $H$. Then we get a faithful representation $\pi \times \lambda_{G_2}$ of $C \rtimes G_2$  on the Hilbert space $K=H \otimes \ell_2(G_2)$; and  a faithful representation $\pi \times \lambda_{G_1} $ of $C \rtimes G_1$ on the Hilbert space $K_0 = H \otimes \ell_2(G_1)$. 
Write $G_2$ as a disjoint union $\bigcup_{i=1}^n x_i G_1$, for suitable $x_i \in G_2$, and consider the subspaces $K_i = H \otimes \ell_2(x_iG_1)$, $i = 1, \dots, n$, of $K$. Let $P_i$ denote the orthogonal projection from $K$ onto $K_i$ and let $U_i = (\pi \times \lambda_{G_2})(u_{x_i}) = I_H \otimes  \lambda_{G_2}(x_i)$, where $u_g \in C \rtimes G_2$, $g \in G_2$, are the canonical unitaries that generate the action of $G_2$ on $C$.  
Then $U_i(K_0) = K_i$, and we have a $*$-isomorphism 
$$\Phi \colon B(K) \to B(K_0) \otimes M_n, \qquad T \mapsto \big(U_i^*P_iTP_jU_j\big)_{i,j=1}^n, \quad T \in B(K).$$
For each $c \in C$ and $g \in G_2$, one checks that 
$$U_i^*P_i(\pi \times \lambda_{G_2})(c\, u_g)P_jU_j = \begin{cases} (\pi \times \lambda_{G_1})\big(\alpha_{x_i^{-1}}(c) \, u_{x_i^{-1}gx_j}\big), & \text{if} \: x_i^{-1}gx_j \in G_1\\ 0, & \text{else}\end{cases}.$$
It follows that $\Phi$ maps $(\pi \times \lambda_{G_2})(C \rtimes G_2)$ into $(\pi \times \lambda_{G_1})(C \rtimes G_1) \otimes M_n$, and we thus obtain the desired embedding of $C \rtimes G_2$ into $(C \rtimes G_1) \otimes M_n$. 
\end{proof}

\noindent
By the proof of simplicity of $B(G)$ one can obtain that $\sigma$ is \emph{strongly outer}, cf.\ \cite[Definition~2.5]{MSII}. Thus the Bernoulli shift action of $G$  on $\bigotimes_G B$  has the \emph{weak Rohlin property},  cf.\ \cite[Definition~2.5]{MSII},  whenever $G$ is countable and elementary amenable by  \cite[Theorem 3.6]{MSII}.

\section{Crossed products by integers}
We shall use a bootstrap argument to see that the \Cs{} $B(G)$ is quasidiagonal for every elementary amenable group $G$. To do so, we introduce the following property:

\begin{definition} \label{def:PQ}
Let $\mathrm{PQ}$ be the class of all countable groups $G$ for which the reduced crossed product \Cs{} 
$(\bigotimes_G M_2)\rtimes G$ is quasidiagonal.
\end{definition}

\noindent Observe that PQ $\subseteq$ QD $\subseteq$ AG, where QD is the class of groups $G$ for which $C^*_\lambda(G)$ is quasidiagonal and AG is the class of amenable groups. The former inclusion follows from the fact that $C^*_\lambda(G)$ embeds into $(\bigotimes_G M_2)\rtimes G$, and the latter inclusion is Rosenberg's theorem. We show in this section that PQ contains the class of countable elementary amenable groups. 

\begin{remark} \label{rem:B(G)}
We observe that a countable group $G$ belongs to $\mathrm{PQ}$ if and only if the Bernoulli 
shift crossed product $B(G)$ is quasidiagonal. Indeed, any unital embedding of $M_2$ into the CAR-algebra $B$ induces an embedding of $(\bigotimes_G M_2)\rtimes G$ into $B(G)$, so if the latter is quasidiagonal, then so is the former. 
Assume that $(\bigotimes_G M_2)\rtimes G$ is quasidiagonal and consider the diagonal action $\delta$ of $G$ on $G \times G$. Then
\[
\textstyle  \big(\bigotimes_G (\bigotimes_{G} M_2)\big)\rtimes G 
 \cong (\bigotimes_{G^2} M_2)\rtimes_{\delta} G 
 \hookrightarrow (\bigotimes_{G^2} M_{2})\rtimes G^2 
 \cong \big((\bigotimes_G M_2)\rtimes G\big) \otimes \big((\bigotimes_G M_2)\rtimes G\big)
\]
where the first isomorphism is induced by a suitable set bijection $G \times G \to G^2$, and 
where the inclusion of the second \Cs{}  into the third arises from the diagonal embedding $G\hookrightarrow G^2$. The \Cs{} on the right-hand side is quasidiagonal (being the minimal tensor  product of quasidiagonal \Cs s), and the \Cs{} on the left-hand side is isomorphic to $B(G)$ if $G$ is infinite. If $G$ is finite, then $B(G)$ embeds into $B \otimes M_{|G|}$, cf.\ Proposition \ref{functoriality} (iv). In either case we see that $B(G)$ is quasidiagonal. 
\end{remark}

\noindent
Before proceeding, let us recall some facts about the class $\mathrm{EG}$ of elementary amenable groups. 
By definition, $\mathrm{EG}$ is the smallest class of groups that contains all finite and all abelian groups, 
and which is closed under the following four operations: 
taking subgroups, quotients, direct limits, and extensions. 
Let $\mathrm{B}_0$ denote the union of all finite groups and the infinite cyclic group $\Z$. 
Note that $\mathrm{B}_0$ is closed under taking subgroups and quotients.
It was shown by C. Chou, \cite{Chou}, and refined by D.\ V.\ Osin, \cite[Theorem~2.1]{Osi}, that all groups in 
$\mathrm{EG}$ can be built up from the basis $\mathrm{B}_0$ by transfinite induction just using 
direct limits and extensions. We let $\mathrm{EG}_{\mathrm c}$ denote the class of countable 
elementary amenable groups. 

\begin{proposition}[Chou, \cite{Chou} and Osin, \cite{Osi}] \label{Osin}
$\mathrm{EG}$ is the smallest class of groups which contains the trivial group $\{1\}$ and which is 
closed under taking direct limits and extensions by groups from $\mathrm{B}_0$.
In particular, $\mathrm{EG}_{\mathrm c}$ is the smallest class of groups which contains 
the trivial group $\{1\}$ and which is closed under taking countable direct limits and 
extensions by groups from $\mathrm{B}_0$.
\end{proposition}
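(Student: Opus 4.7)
The plan is to set $\mathcal{C}$ equal to the smallest class of groups that contains the trivial group and is closed under direct limits and extensions by $\mathrm{B}_0$-groups, and to show $\mathcal{C}=\mathrm{EG}$. The inclusion $\mathcal{C}\subseteq\mathrm{EG}$ is immediate since $\mathrm{EG}$ contains $\{1\}$ and is closed under direct limits and arbitrary extensions. For the reverse inclusion, I use that $\mathrm{EG}$ is the smallest class containing all finite and all abelian groups and closed under the four operations of taking subgroups, quotients, direct limits, and extensions; it thus suffices to verify that $\mathcal{C}$ contains every finite and abelian group and is itself closed under each of these four operations.

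Membership of the finite groups in $\mathcal{C}$ is clear: each is in $\mathrm{B}_0$, hence arises as an extension of $\{1\}$. Every finitely generated abelian group is built from $\{1\}$ by a finite tower of extensions using $\Z$ and finite cyclic groups, and so belongs to $\mathcal{C}$; a general abelian group is the direct limit of its finitely generated subgroups and is therefore also in $\mathcal{C}$. To handle subgroups and quotients, I introduce the transfinite filtration $\mathcal{C}_0=\{\{1\}\}$ and, for $\alpha>0$, $\mathcal{C}_\alpha$ consisting of all direct limits and all $\mathrm{B}_0$-extensions of groups drawn from $\bigcup_{\beta<\alpha}\mathcal{C}_\beta$. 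A transfinite induction on $\alpha$, exploiting the fact that $\mathrm{B}_0$ is closed under subgroups and quotients (subgroups of $\Z$ are trivial or infinite cyclic, quotients of $\Z$ are cyclic, and subgroups and quotients of finite groups are finite), then shows that every subgroup and every quotient of a group in $\mathcal{C}_\alpha$ lies in $\mathcal{C}$; the key observation is that for an extension $1\to K\to G\to B\to 1$, a subgroup $H\le G$ fits into $1\to K\cap H\to H\to H/(K\cap H)\to 1$ with $H/(K\cap H)\hookrightarrow B$, and an analogous three-term sequence arises upon taking a quotient of $G$ by a normal subgroup.

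The step that warrants the most care is showing that $\mathcal{C}$ is closed under arbitrary extensions $1\to N\to G\to Q\to 1$ with $N,Q\in\mathcal{C}$, since a priori $Q$ is allowed to be arbitrarily complicated. I run a second transfinite induction, this time on the least $\alpha$ with $Q\in\mathcal{C}_\alpha$. If $Q=\varinjlim Q_i$ is a direct limit with each $Q_i\in\bigcup_{\beta<\alpha}\mathcal{C}_\beta$, then the preimages $G_i\subseteq G$ of $Q_i$ form a directed system of extensions $1\to N\to G_i\to Q_i\to 1$, each lying in $\mathcal{C}$ by induction, and $G=\varinjlim G_i\in\mathcal{C}$. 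If instead $Q$ sits in an extension $1\to Q'\to Q\to B\to 1$ with $Q'\in\bigcup_{\beta<\alpha}\mathcal{C}_\beta$ and $B\in\mathrm{B}_0$, then the preimage $G'$ of $Q'$ in $G$ is an extension of $N$ by $Q'$, so $G'\in\mathcal{C}$ by induction, and $G$ is in turn an extension of $G'$ by $B$, giving $G\in\mathcal{C}$. This completes the verification that $\mathcal{C}=\mathrm{EG}$. The countable statement follows by the very same argument: every operation used --- countable direct limits and extensions by groups from $\mathrm{B}_0$ --- preserves countability, so the analogous smallest class of countable groups coincides with $\mathrm{EG}_{\mathrm c}$.
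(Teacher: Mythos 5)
The paper does not prove Proposition~\ref{Osin} at all: it is quoted as a known result of Chou and Osin, with the reader referred to \cite{Chou} and to \cite[Theorem~2.1]{Osi}. Your self-contained argument is therefore necessarily a different route, and it is essentially the standard one underlying Osin's proof: define $\mathcal{C}$ as the smallest class containing $\{1\}$ and closed under direct limits and $\mathrm{B}_0$-extensions, get $\mathcal{C}\subseteq\mathrm{EG}$ for free, and for the converse verify that $\mathcal{C}$ contains the finite and abelian groups and is closed under Chou's four operations, the nontrivial points being closure under subgroups/quotients (via the ordinal filtration $\mathcal{C}_\alpha$ and the three-term sequences $1\to K\cap H\to H\to H/(K\cap H)\to 1$, resp.\ $1\to KM/M\to G/M\to G/KM\to 1$, together with $\mathrm{B}_0$ being closed under subgroups and quotients) and closure under arbitrary extensions (via a second transfinite induction on the rank of the quotient). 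All of these steps are correct. The one place where you are too quick is the final sentence: ``preserves countability'' only gives the easy inclusion $\mathcal{C}_{\mathrm{c}}\subseteq\mathrm{EG}_{\mathrm{c}}$; for the converse you must show that a \emph{countable} group in $\mathrm{EG}$ can be built using only countable groups at every stage, even though the defining construction of $\mathrm{EG}$ may pass through uncountable groups. This is fixable with the tools you already have --- your subgroup/quotient induction in fact shows that these operations do not increase the filtration level, so one can induct on rank and replace a general direct limit by a countable increasing union of countable images, and replace the groups appearing in an extension by the relevant countable subgroups/quotients --- but it deserves an explicit argument rather than ``the very same argument.'' What your approach buys is a complete, elementary proof in place of a citation; what the paper's citation buys is brevity and Osin's sharper quantitative statement about the transfinite hierarchy, which is not needed here.
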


\noindent
A class $\mathrm{C}$ of groups is closed under extensions by groups from $\mathrm{B}_0$ 
if for all short exact sequences $1 \to N \to G \to G/N \to 1$, that $N \in \mathrm{C}$ 
and $G/N \in \mathrm{B}_0$ implies $G \in \mathrm{C}$.
Thus, by Proposition \ref{functoriality}, to prove $\mathrm{EG}_{\mathrm c}\subseteq\mathrm{PQ}$, 
it remains to show $B(G)\rtimes_\alpha\Z$ is quasidiagonal for every $G\in\mathrm{PQ}$. 
This follows from the following two results from the classification theory of \Cs{}s.

The first is proved in \cite[Corollary 6.2]{MS} using classification theorems by H.\ Lin--Z.\ Niu, \cite{LN}, 
and W.\ Winter, \cite{Win}. We denote by $\cU$ the universal UHF-algebra. 

\begin{theorem}[\cite{LN}, \cite{MS}] 
\label{Classification} 
Let $A$ be a unital separable simple nuclear monotracial quasidiagonal \Cs{} in the UCT class. 
Then $A \otimes \cU$ is an A\T-algebra of real rank zero. In particular, $A$ is embeddable 
into an AF-algebra.
\end{theorem}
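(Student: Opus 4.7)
The plan is to tensor $A$ with the universal UHF-algebra $\cU$ and run the resulting algebra through the Lin--Niu classification machinery. First I would observe that $A\otimes\cU$ inherits the hypotheses placed on $A$: it is unital, separable, simple (simplicity is preserved by minimal tensor product with a simple nuclear algebra), nuclear, still monotracial (as $\cU$ is monotracial and $A$ has a unique trace), still in the UCT class (UCT is preserved by tensor products with UCT algebras), and still quasidiagonal (QD is preserved under tensoring with the nuclear QD algebra $\cU$). In addition, $A\otimes\cU$ is $\cU$-stable, hence in particular $\mathcal{Z}$-stable, so it has strict comparison of positive elements implemented by the unique trace.

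The main step, and the hardest part, is to use the quasidiagonality of $A$ together with the $\mathcal{Z}$-stability and the monotracial hypothesis to show that $A\otimes\cU$ has tracial rank at most one, equivalently is TAI (tracially approximately interval). This is the content of the Matui--Sato argument in \cite{MS}: one converts an \emph{external} QD approximation by matrix algebras into \emph{internal} finite-dimensional or interval subalgebras $B\subseteq A\otimes\cU$ whose tracial complement is arbitrarily small. The passage from ``external'' to ``internal'' approximation is the delicate point and uses a central-sequence/rotation argument in $(A\otimes\cU)_\omega\cap(A\otimes\cU)'$, which has a rich structure precisely because $A\otimes\cU$ is simple, monotracial, nuclear and $\mathcal{Z}$-stable; the uniqueness of the trace lets one match traces of projections exactly and glue local matrix approximations into a genuine interval subalgebra.

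Once $A\otimes\cU$ is known to have tracial rank at most one and to lie in the UCT class, I would invoke the Lin--Niu classification theorem \cite{LN}: such an algebra is isomorphic to a simple AH-algebra of very slow (in fact, no) dimension growth. The monotracial and $\cU$-absorbing hypotheses now drastically cut down the Elliott invariant: the tracial simplex is a point, and $K_0$ and $K_1$ are both divisible (being modules over $K_0(\cU)=\Q$), with $K_0$ a totally ordered group with a unique state. This is exactly the Elliott invariant of a simple A\T{}-algebra of real rank zero, and the classification then forces $A\otimes\cU$ itself to be such an algebra. Real rank zero also follows directly from tracial rank at most one together with strict comparison and the presence of sufficiently many projections supplied by $\cU$-stability.

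For the final assertion, $A$ embeds unitally into $A\otimes\cU$ via $a\mapsto a\otimes 1$, so it suffices to see that a simple A\T{}-algebra of real rank zero is AF-embeddable; this is classical (e.g.\ via Pimsner's construction realising such crossed-product-type inductive limits inside AF-algebras), and any AF-extension of $A\otimes\cU$ then contains $A$.
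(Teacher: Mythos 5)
Your proposal takes essentially the same route as the paper, which likewise obtains the statement by feeding $A\otimes\cU$ into the Matui--Sato/Lin--Niu/Winter classification machinery (strict comparison from UHF-absorption, identification of the Elliott invariant with that of a simple A\T-algebra of real rank zero, and the classical AF-embeddability of such algebras via Elliott's classification). One small correction: $K_0(A\otimes\cU)$ need not be totally ordered (the kernel of the unique state can be nontrivial); what one actually checks is that the divisible, weakly unperforated, uniquely-stated ordered $K_0$ together with the divisible $K_1$ lies in the range of the invariant for simple real-rank-zero A\T-algebras.
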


\noindent
The classification theorems mentioned above imply that if $A$ and $B$ are \Cs{}s satisfying the assumptions 
of Theorem~\ref{Classification} as well as the \emph{strict comparison}, 
then $A \cong B$ if and only if $K_0(A) \cong K_0(B)$ 
(as ordered abelian groups with distinguished order units) and $K_1(A) \cong K_1(B)$ (as groups). 
Theorem~\ref{Classification} follows from this because $A \otimes \cU$ has strict comparison, \cite{RorUHF}, 
and one can check that the K-theory of $A \otimes \cU$ agrees with the 
$\mathrm{K}$-theory of a simple A\T-algebra of real rank zero. 
AF-embeddability of such an algebra is a consequence of Elliott's classification result, \cite{Ell}, 
see also \cite[Proposition 4.1]{Ror} for a simpler proof of this fact. 

The second is Matui's theorem, \cite[Theorem 2]{Mat}, about AF-embeddability. We recall that every AF-algebra 
is quasidiagonal, and hence every AF-embeddable \Cs{} is quasidiagonal. It is not known whether there is a 
nuclear quasidiagonal \Cs{} which is not AF-embeddable. 

\begin{theorem}[Matui, \cite{Mat}]\label{matuiat}
Let $A$ be a unital separable simple A\T-algebra of real rank zero. 
Then for any $*$-auto\-mor\-phism $\alpha$, the crossed product $A \rtimes_\alpha \Z$ is embeddable into an AF-algebra. 
In particular, $A\rtimes_\alpha \Z$ is quasidiagonal. 
\end{theorem}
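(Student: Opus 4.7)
The plan is to use the classification machinery for simple A\T-algebras of real rank zero to produce, at each finite stage, an approximate embedding of $A \rtimes_\alpha \Z$ into a finite-dimensional algebra, then patch these via an Elliott intertwining into a global unital embedding of $A \rtimes_\alpha \Z$ into an AF-algebra $D$. The ``In particular'' clause is then immediate, since every AF-algebra is quasidiagonal, hence so is any sub-\Cs{} of one. Concretely, I would first write $A$ as an inductive limit $A = \lim_n (A_n, \varphi_n)$ of circle algebras $A_n = \bigoplus_i M_{k_i^{(n)}}(C(\T))$, using Elliott's theorem to arrange the connecting maps so that real rank zero is visible in a strong form: inside each $A_n$ one finds a finite-dimensional subalgebra $F_n$ that is $\varepsilon_n$-dense in $A_n$ and whose relative commutant contains enough matrix units to invoke approximate divisibility. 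The automorphism $\alpha$ will not preserve this decomposition, but using an approximate intertwining together with uniqueness of automorphisms up to approximate unitary equivalence on such building blocks, one may replace $\alpha$ by a cocycle-conjugate automorphism that, modulo small perturbations, maps each $F_n$ into some $F_{m(n)}$.

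The next step is a tracial Rohlin-type tower for $\alpha$: since $A$ is simple with real rank zero and strict comparison, every $*$-automorphism should admit, for any $k$ and $\varepsilon>0$, pairwise orthogonal projections $p_0, \ldots, p_{k-1} \in A$ with $\sum_i \tau(p_i) > 1 - \varepsilon$ and $\|\alpha(p_i) - p_{i+1}\|$ small. Combining these towers with the finite-dimensional approximations of the previous step lets me, for any finite subset $F \subset A \rtimes_\alpha \Z$ and $\varepsilon > 0$, produce a unital $*$-homomorphism from the \Cs{} generated by $F$ into a finite-dimensional \Cs{} that is $\varepsilon$-multiplicative and $\varepsilon$-isometric on $F$. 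An Elliott-style two-sided intertwining then assembles these local approximations into the desired embedding $A \rtimes_\alpha \Z \hookrightarrow D$.

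The main obstacle is the interaction with $K_1$: since $K_1(A)$ is generally nonzero and the Pimsner--Voiculescu sequence can produce further $K_1$-classes in $A \rtimes_\alpha \Z$, while $K_1$ of any AF-algebra vanishes, the target $D$ must be built large enough to absorb all such classes. K-theoretically this is harmless (one can always enlarge the dimension group), but doing it compatibly with the tracial Rohlin towers above while keeping the embedding faithful both on $A$ and on the implementing unitary of the $\Z$-action is the delicate technical heart of the argument; it is here that the strict comparison of $A$ and the uniqueness results for automorphisms of simple A\T-algebras of real rank zero do the real work.
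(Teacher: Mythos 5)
First, note that the paper does not prove this statement at all: it is quoted as an external result, Matui's theorem \cite[Theorem 2]{Mat}, and the only argument the paper supplies for the ``in particular'' clause is the standing remark that every AF-algebra, hence every sub-\Cs{} of an AF-algebra, is quasidiagonal. So your proposal has to be judged against Matui's actual proof, and it has two genuine gaps. The first is the Rohlin-tower step. It is simply false that \emph{every} $*$-automorphism of a unital simple A$\T$-algebra of real rank zero admits towers of orthogonal projections $p_0,\dots,p_{k-1}$ with $\sum_i\tau(p_i)>1-\varepsilon$ and $\alpha(p_i)\approx p_{i+1}$: take $\alpha=\mathrm{id}$, or any inner automorphism, and the tower condition forces $p_i\approx p_{i+1}$, contradicting orthogonality and the trace bound. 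Rohlin-type properties require (uniform) outerness of all nonzero powers of $\alpha$, and the case where some power of $\alpha$ is (approximately) inner is precisely the hard case that Matui's proof has to treat separately; your proposal assumes away exactly the difficulty the theorem is about. (When $\alpha$ does have the Rohlin property, Kishimoto's results show $A\rtimes_\alpha\Z$ is again a simple A$\T$-algebra of real rank zero and one concludes by Elliott classification; the content of Matui's theorem is that \emph{no} hypothesis on $\alpha$ is needed.)

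The second gap is that the approximation scheme you describe does not produce an AF-embedding. There are no unital $*$-homomorphisms from the simple infinite-dimensional \Cs{} $A\rtimes_\alpha\Z$ (or from the \Cs{} generated by a generating finite set $F$) into a finite-dimensional \Cs, so what you can actually produce are $\varepsilon$-multiplicative, $\varepsilon$-isometric c.p.\ maps; assembling those by a one-sided intertwining yields an embedding into $\prod_k M_{n_k}/\bigoplus_k M_{n_k}$, i.e.\ it proves that $A\rtimes_\alpha\Z$ is an MF/NF algebra (equivalently, quasidiagonal, since it is nuclear), not that it embeds into an AF-algebra --- and the paper itself points out that it is unknown whether these are equivalent. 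A two-sided intertwining would prove $A\rtimes_\alpha\Z$ \emph{is} AF, which is false whenever $K_1(A\rtimes_\alpha\Z)\neq 0$, as the Pimsner--Voiculescu sequence forces (e.g.\ the class of the implementing unitary). Relatedly, the $K_1$ issue is not ``harmless'': an AF-embedding necessarily kills $K_1$, so it cannot be obtained by enlarging a dimension group; one needs an honest construction of injective $*$-homomorphisms into AF building blocks, which is where Matui's (and, for the AF case, N.\ Brown's and Pimsner's) Rohlin-tower and Berg-technique constructions do work that your outline does not replace.
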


\begin{corollary}\label{zcross}
Let $A$ be a unital separable simple nuclear monotracial quasidiagonal \Cs{} in the UCT class. 
Then for any $*$-auto\-mor\-phism $\alpha$, the crossed product $A \rtimes_\alpha \Z$ is embeddable into an AF-algebra. 
In particular, $A\rtimes_\alpha \Z$ is quasidiagonal. 
\end{corollary}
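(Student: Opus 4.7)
The plan is to bootstrap the two ingredients already quoted: use Theorem~\ref{Classification} to replace $A$ by the much more concrete algebra $A\otimes\cU$, apply Matui's theorem (Theorem~\ref{matuiat}) to a suitable extension of $\alpha$, and then embed $A\rtimes_\alpha\Z$ into the resulting AF-embeddable crossed product.

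More precisely, I would first check that $A\otimes\cU$ meets the hypotheses of Theorem~\ref{matuiat}: it is unital and separable because $A$ and $\cU$ are; it is simple because $A$ is simple and $\cU$ is a nuclear simple \Cs; and Theorem~\ref{Classification} promotes it to an A\T-algebra of real rank zero. I would then extend the automorphism $\alpha$ to the \textbf{$\Z$-equivariant} automorphism $\tilde\alpha:=\alpha\otimes\Id_\cU$ of $A\otimes\cU$. Matui's theorem (Theorem~\ref{matuiat}) immediately gives that
\[
(A\otimes\cU)\rtimes_{\tilde\alpha}\Z
\]
is AF-embeddable, hence quasidiagonal.

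To finish, I would exhibit an embedding $A\rtimes_\alpha\Z\hookrightarrow(A\otimes\cU)\rtimes_{\tilde\alpha}\Z$ induced by the equivariant inclusion $a\mapsto a\otimes 1_\cU$. The only point that needs a line of justification is injectivity at the level of the (reduced) crossed product: since $\Z$ is amenable, the reduced and full crossed products coincide, so equivariant injective $*$-homomorphisms descend to injective $*$-homomorphisms of the crossed products. Alternatively, the unique trace on $\cU$ yields an $\tilde\alpha$-equivariant conditional expectation $\Id_A\otimes\tau_\cU:A\otimes\cU\to A$, which forces the induced map of reduced crossed products to be injective. Composing this embedding with the AF-embedding from Matui's theorem exhibits $A\rtimes_\alpha\Z$ as a subalgebra of an AF-algebra, and in particular it is quasidiagonal.

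There is really no hard step here; the work has been done upstream, in Theorems~\ref{Classification} and~\ref{matuiat}. The only mild subtlety is the functoriality of the reduced crossed product under the equivariant embedding $A\hookrightarrow A\otimes\cU$, and this is handled by amenability of $\Z$ (or equivalently, by the equivariant conditional expectation coming from the unique trace on $\cU$).
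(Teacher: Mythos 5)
Your proposal is correct and follows exactly the paper's argument: the paper's proof is the one-line ``apply Theorems~\ref{Classification} and~\ref{matuiat} to $(A\otimes\cU)\rtimes_{\alpha\otimes\id}\Z$, which contains $A\rtimes_\alpha\Z$.'' You have merely spelled out the routine verifications (hypotheses of Matui's theorem, injectivity of the induced map on crossed products via amenability of $\Z$ or the trace-induced conditional expectation) that the authors leave implicit.
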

\begin{proof}
Apply Theorems \ref{Classification} and \ref{matuiat} to 
$(A\otimes\cU)\rtimes_{\alpha\otimes\id}\Z$, which contains $A\rtimes_\alpha\Z$. 
\end{proof}

\begin{remark}
We note that $A \rtimes_\alpha G$ is again monotracial if $A$ is a unital monotracial \Cs, $G$ is a countable amenable group, and 
the action $\alpha$ of $G$ has the weak Rohlin property. This follows  easily from the definition of the weak Rohlin property, cf.\ \cite[Definition 2.5]{MSII}, arguing for example as in  \cite[Remark 2.8]{MS:stronglyouter}.
\end{remark}

\noindent
Here is the main result of this paper.

\begin{theorem}\label{maintheorem} \mbox{}
\begin{enumerate}
\item The class $\mathrm{PQ}$ is closed under the following operations: direct limits, subgroups, and extensions by countable elementary amenable groups.
\item $\mathrm{EG}_{\mathrm c}\subset\mathrm{PQ}$. 
\item $C^*_\lambda(G)$ is AF-embeddable for every group $G$ in $\mathrm{PQ}$. 
\item $C^*_\lambda(G)$ quasidiagonal for all elementary amenable groups $G$.
\end{enumerate}
\end{theorem}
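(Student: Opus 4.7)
I would treat (i) as the technical core and derive (ii)--(iv) from it. For (i), the closure properties of $\mathrm{PQ}$ follow from the functorial properties of $B(\cdot)$ in Proposition~\ref{functoriality} combined with standard hereditary properties of quasidiagonality; the one deep input is Corollary~\ref{zcross}, which handles the extension-by-$\Z$ step. Then (ii) drops out of Proposition~\ref{Osin} once one notes $\{1\}\in\mathrm{PQ}$. For (iii) I would feed $B(G)$ into Theorem~\ref{Classification} and transfer AF-embeddability to the subalgebra $C^*_\lambda(G)$. Part (iv) follows from (ii)--(iii) for countable groups and a direct-limit argument in general.

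For (i), I dispose first of the easy operations. If $G=\bigcup G_n$ with $G_n\in\mathrm{PQ}$, then Proposition~\ref{functoriality}(ii) gives $B(G)=\overline{\bigcup B(G_n)}$, an inductive limit of quasidiagonal \Cs s, hence quasidiagonal. If $H\subseteq G$ with $G\in\mathrm{PQ}$, then Proposition~\ref{functoriality}(i) embeds $B(H)$ as a unital $\mathrm{C}^*$-subalgebra of $B(G)$, so $B(H)$ is quasidiagonal. For extensions by a group $Q\in\mathrm{B}_0$, assume $1\to N\to G\to Q\to 1$ with $N\in\mathrm{PQ}$. If $Q$ is finite, Proposition~\ref{functoriality}(iv) embeds $B(G)$ unitally into $B(N)\otimes M_{|Q|}$, which is quasidiagonal. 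If $Q\cong\Z$, the sequence splits (since $\Z$ is free), so $G\cong N\rtimes\Z$ and Proposition~\ref{functoriality}(iii) identifies $B(G)\cong B(N)\rtimes_\alpha\Z$. Because $N$ is amenable (as $\mathrm{PQ}\subseteq\mathrm{AG}$), Proposition~\ref{functoriality} makes $B(N)$ unital, separable, simple, nuclear, monotracial, and in the UCT class; the PQ hypothesis adds quasidiagonality, and Corollary~\ref{zcross} then yields quasidiagonality of $B(G)$.

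To upgrade from quotients in $\mathrm{B}_0$ to arbitrary countable elementary amenable quotients, I would run a short ``extensions inheritance'' induction. Let $\mathcal{E}$ be the class of countable groups $Q$ such that every extension of a $\mathrm{PQ}$-group by $Q$ lies in $\mathrm{PQ}$. By stacking two consecutive extensions, $\mathcal{E}$ is closed under extensions by $\mathrm{B}_0$; by taking preimages $G_i=\pi^{-1}(Q_i)$ under $\pi\colon G\to Q$ when $Q$ is a direct limit of $Q_i$'s, $\mathcal{E}$ is closed under countable direct limits; and $\{1\}\in\mathcal{E}$ trivially. Proposition~\ref{Osin} then gives $\mathcal{E}=\mathrm{EG}_{\mathrm c}$, completing (i). Part (ii) is immediate from (i) and Proposition~\ref{Osin}, since $\{1\}\in\mathrm{PQ}$. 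For (iii), the PQ hypothesis together with Proposition~\ref{functoriality} places $B(G)$ in the setting of Theorem~\ref{Classification}, whence $B(G)$ is AF-embeddable, and therefore so is the unital subalgebra $C^*_\lambda(G)\subseteq B(G)$. For (iv) with $G$ countable elementary amenable, combine (ii) and (iii). For a general elementary amenable $G$, write $G$ as the directed union of its countable subgroups (each itself elementary amenable, hence with quasidiagonal reduced group \Cs{} by the countable case), and note that amenability makes $C^*_\lambda(G)$ the inductive limit of these subalgebras, so it is quasidiagonal.

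The substantive obstacle is concentrated entirely in Corollary~\ref{zcross} (and through it Theorems~\ref{Classification} and~\ref{matuiat}); the remainder is an organized bootstrap over the functoriality of $B(\cdot)$ and over Proposition~\ref{Osin}. The only subtle point to verify explicitly is that the PQ hypothesis on $N$ actually supplies all the structural prerequisites of Corollary~\ref{zcross}, which is why one must invoke the inclusion $\mathrm{PQ}\subseteq\mathrm{AG}$ at the inductive step so that Proposition~\ref{functoriality} delivers nuclearity and the UCT for $B(N)$.
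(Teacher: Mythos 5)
Your proposal is correct and follows essentially the same route as the paper: the easy closure properties come from Proposition~\ref{functoriality}, the $\mathrm{B}_0$-extension steps are handled exactly as in the paper (finite index via Proposition~\ref{functoriality}(iv), the split $\Z$-extension via Proposition~\ref{functoriality}(iii) and Corollary~\ref{zcross}), and your auxiliary class $\mathcal{E}$ is precisely the paper's class $\mathrm{QQ}$, bootstrapped over Proposition~\ref{Osin} in the same way. The only point worth making explicit is that throughout you silently identify ``$G\in\mathrm{PQ}$'' with ``$B(G)$ is quasidiagonal,'' which is exactly the content of Remark~\ref{rem:B(G)} and should be cited at the start of the argument.
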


\begin{proof}
(i). The first two claims follow from Proposition \ref{functoriality}. 

Let $\mathrm{QQ}$ denote the class of those countable groups $H$ which satisfy 
the following property: whenever $G$ is a group having a normal subgroup $N$ 
from $\mathrm{PQ}$ such that $G/N\cong H$, the group $G$ belongs to $\mathrm{PQ}$. 
Since the trivial group $\{1\}$ belongs to $\mathrm{PQ}$, one has $\mathrm{QQ}\subseteq\mathrm{PQ}$.
We shall prove that $\mathrm{EG}_{\mathrm c}\subseteq\mathrm{QQ}$. 
By Proposition~\ref{Osin}, it suffices to show that $\mathrm{QQ}$ is closed 
under taking countable direct limits and extensions by groups from $\mathrm{B}_0$. 
Let a surjective homomorphism $q\colon G\to H$ such that $\ker q \in \mathrm{PQ}$ be given. 
First, suppose that $H=\bigcup_n H_n$ is a directed union of $H_n \in \mathrm{QQ}$.
Then, one has $G_n:=q^{-1}(H_n) \in \mathrm{PQ}$ and $G = \bigcup_n G_n$. Hence $G$ belongs to PQ. Next, suppose that $H$ has finite-index subgroup $H_0$ which belongs to $\mathrm{QQ}$. 
Then, $G_0 := q^{-1}(H_0)$ is a finite-index subgroup of $G$ which belongs to $\mathrm{PQ}$.
Hence $G \in \mathrm{PQ}$. 
Lastly, suppose that $H$ has a normal subgroup $H_0$ such that $H_0\in\mathrm{QQ}$ 
and $H/H_0\cong\Z$. Then, since $\Z$ is a free group, $G\cong G_0\rtimes\Z$ for 
$G_0:=q^{-1}(H_0) \in \mathrm{PQ}$. That $G \in \mathrm{PQ}$ follows from 
Proposition~\ref{functoriality}.(\ref{item:sdp}) and Corollary~\ref{zcross}.
This finishes the proof of (i).

(ii) follows from (i) and the fact that the trivial group belongs to PQ. 

(iii). If $G$ belongs to PQ, then $B(G)$ satisfies the hypothesis of Theorem~\ref{Classification}, 
cf.\ Proposition~\ref{functoriality} and Remark~\ref{rem:B(G)}. Hence $B(G)$ is AF-embeddable 
by Theorem~\ref{Classification}, whence $C^*_\lambda(G) \subseteq B(G)$ is also AF-embeddable. 

(iv). It follows from (ii) and (iii) that $C^*_\lambda(G)$ is quasidiagonal for all countable elementary amenable groups. When $G$ is uncountable and elementary amenable, it is a direct limit of countable subgroups,  all of which are elementary amenable. Since quasidiagonality is separably determined, 
the reduced group \Cs{} $C^*_\lambda(G)$ is quasidiagonal. 
\end{proof}
%\begin{theorem}\label{maintheorem}
%Let $G$ be an elementary amenable group. 
%Then, the reduced group \Cs{} $C^*_\lambda(G)$ is quasidiagonal.
%If $G$ moreover is countable, then $C^*_\lambda(G)$ is embeddable into an AF-algebra.
%\end{theorem}
%\begin{proof}
%Note that $C^*_\lambda(G)$ is embeddable into an AF-algebra  for all groups $G$ in PQ. 
%Indeed, if $G$ belongs to PQ, then $B(G)$ satisfies the hypothesis of Theorem~\ref{Classification}, 
%cf.\ Proposition~\ref{functoriality} and Remark~\ref{rem:B(G)}. Hence $B(G)$ is AF-embeddable 
%by Theorem~\ref{Classification}, whence $C^*_\lambda(G) \subseteq B(G)$ is also AF-embeddable. 
%
%We first prove that $\mathrm{EG}_{\mathrm c}\subseteq\mathrm{PQ}$. 
%By Proposition~\ref{Osin}, it suffices to show that $\mathrm{PQ}$ is closed 
%under taking countable direct limits and extensions by groups from $\mathrm{B}_0$. 
%But this follows from Proposition~\ref{functoriality} and Corollary~\ref{zcross}. 
%Thus $C^*_\lambda(G)$ is embeddable into an AF-algebra (and in particular quasidiagonal) 
%for every countable elementary amenable group $G$. 
%
%When $G$ is uncountable and elementary amenable, it is a direct limit of countable 
%subgroups, all of which are elementary amenable. Since quasidiagonality is separably determined, 
%the reduced group \Cs{} $C^*_\lambda(G)$ is quasidiagonal. 
%\end{proof}

\noindent
The class $\mathrm{PQ}$ is strictly larger than the class $\mathrm{EG}_{\mathrm c}$ 
of countable elementary amenable groups, as the proposition below shows. 
Recall that a group $G$ is said to be LEF if for every finite subset $F\subset G$ there are 
a finite group $H$ and an injective map $\pi\colon F\to H$ such that 
$\pi(gh)=\pi(g)\pi(h)$ whenever $g,h,gh\in F$. In \cite{CarDadEck,Kerr:Barcelona}, 
it is shown that amenable LEF groups have quasidiagonal reduced group \Cs{}s. 
We adapt this and prove the following. 

\begin{proposition} \label{LEPQ}
Let $G$ be a countable amenable group which satisfies the following property: 
for every finite subset $F\subset G$ there are a group $H$ from $\mathrm{PQ}$ and 
an injective map $\pi\colon F\to H$ such that $\pi(gh)=\pi(g)\pi(h)$ whenever $g,h,gh\in F$.
Then, $G$ belongs to $\mathrm{PQ}$. 

In particular, any countable amenable LEF group belongs to PQ, and more particular, any countable amenable residually finite group,  the Grigorchuk group, and topological 
full groups of Cantor minimal systems belong to $\mathrm{PQ}$.
\end{proposition}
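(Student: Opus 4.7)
The strategy is to show that $(\bigotimes_G M_2)\rtimes G$ is quasidiagonal, which by Remark \ref{rem:B(G)} amounts to $G\in\mathrm{PQ}$. By Voiculescu's characterization, this separable \Cs{} is quasidiagonal as soon as, for every finite subset $F$ and every $\varepsilon>0$, there is a u.c.p.\ map into a matrix algebra that is $\varepsilon$-isometric and $\varepsilon$-multiplicative on $F$. I plan to produce such an approximation in two stages: first a c.p.\ map into $(\bigotimes_H M_2)\rtimes H$ for a group $H\in\mathrm{PQ}$ supplied by the local embeddability hypothesis, then further into a matrix algebra using the quasidiagonality of this intermediate algebra. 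This mirrors the LEF approach of \cite{CarDadEck,Kerr:Barcelona}, with finite target groups replaced by PQ groups and finite-dimensionality replaced by quasidiagonality.

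Fix such $F$ and $\varepsilon$. I may assume the elements of $F$ have the form $au_g$ with $g$ in a finite set $S\subset G$ and $a$ a finite elementary tensor supported on a finite set $T\subset G$. By amenability I choose a F\o lner set $E\subset G$ with $|sE\triangle E|/|E|<\varepsilon$ for every $s\in S\cup S^{-1}$. The hypothesis then provides $H\in\mathrm{PQ}$ and an injective partial morphism $\pi:K\to H$, where $K\subset G$ is chosen finite and large enough to contain $S$, $T$, $E$, and all the products (such as $SE$, $TE$, $ST$, and $S\cdot(E\cap S^{-1}E)$) that the construction below requires to be preserved by $\pi$.

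From these ingredients I would build a u.c.p.\ map $\Phi\colon(\bigotimes_G M_2)\rtimes G\to(\bigotimes_H M_2)\rtimes H$ by a Stinespring-type compression: represent the source crossed product on $L^2(\bigotimes_G M_2,\tau)\otimes\ell_2(G)$, compress the group factor $\ell_2(G)$ to $\ell_2(E)$ (a Fej\'er-type averaging coming from the F\o lner set), and then transport the result to $L^2(\bigotimes_H M_2,\tau)\otimes\ell_2(H)$ using the position embedding $\iota_\pi\colon\bigotimes_T M_2\hookrightarrow\bigotimes_H M_2$ together with the isometry $V\delta_g=\delta_{\pi(g)}$. The partial-morphism property of $\pi$ on $K$ ensures that the image lies in the sub-crossed-product $(\bigotimes_H M_2)\rtimes H$ rather than merely the ambient $\bigotimes_H M_2\otimes B(\ell_2(H))$; combined with the F\o lner condition, it yields approximate multiplicativity and isometry of $\Phi$ on $F$ controlled by $\varepsilon$ and the size of $K$. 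Composing $\Phi$ with a c.p.\ approximation $\psi\colon(\bigotimes_H M_2)\rtimes H\to M_n$ (available since $H\in\mathrm{PQ}$, cf.\ Remark \ref{rem:B(G)}) and running a standard reindexing argument then shows that $(\bigotimes_G M_2)\rtimes G$ is quasidiagonal, i.e.\ $G\in\mathrm{PQ}$. The final ``in particular'' assertions follow because every countable amenable residually finite group (including the Grigorchuk group) is LEF via separation by finite quotients, and topological full groups of Cantor minimal systems are amenable by \cite{JusMonod} and LEF by \cite{GM}; in each case the witnessing groups may be taken finite and hence in $\mathrm{EG}_{\mathrm c}\subset\mathrm{PQ}$ by Theorem \ref{maintheorem}.

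The main obstacle will be making this Stinespring--Fej\'er compression both honest (really a c.p.\ map landing inside the sub-crossed-product) and quantitative: three sources of error---approximate invariance of $E$, the failure of $\pi$ to be a full homomorphism outside $K$, and the compression itself---must combine so that by taking $\varepsilon$ small and $K$ large, the multiplicativity and isometry estimates on $F$ become arbitrarily good. The linchpin identity $\pi(sg)=\pi(s)\pi(g)$ for $s\in S$ and $g\in E\cap s^{-1}E$ is exactly what the partial-morphism hypothesis delivers once $K$ is chosen with this closure in mind, so the argument should go through cleanly; the delicate part is the bookkeeping on the interaction between the compression and the crossed-product structure.
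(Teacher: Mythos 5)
Your route is genuinely different from the paper's, and it is considerably more quantitative than it needs to be. The paper exploits three pieces of soft structure that make F\o lner sets, compressions, and $\varepsilon$-estimates unnecessary: (1) since $G$ is amenable, the reduced and full crossed products coincide, so one only needs an \emph{exactly} covariant pair in some target algebra; the paper takes the target to be $\prod_n A_n/\bigoplus_n A_n$ with $A_n=(\bigotimes_{H_n}M_2)\rtimes H_n$, where the defects of the partial morphisms $\pi_n$ vanish in the quotient, so covariance holds on the nose and a $*$-homomorphism from $(\bigotimes_G M_2)\rtimes G$ comes for free; (2) that crossed product is simple (outerness of the Bernoulli shift, as in Proposition~\ref{functoriality}), so the homomorphism is automatically isometric, which replaces all of your approximate-isometry bookkeeping; (3) the Blackadar--Kirchberg machinery: a separable subalgebra of $\prod_n A_n/\bigoplus_n A_n$ with each $A_n$ MF is MF, and nuclear MF algebras are NF, hence quasidiagonal. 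No u.c.p.\ maps to matrix algebras are ever constructed by hand.

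Measured against that, your sketch defers exactly the two steps that carry the analytic weight, and they are not mere bookkeeping. First, the compression $P_E x P_E$ of an element of the crossed product is an operator on $L^2\otimes\ell_2(E)$, \emph{not} an element of $(\bigotimes_H M_2)\rtimes H$; to obtain an honest u.c.p.\ map landing in the sub-crossed-product you must instead use the Herz--Schur multiplier attached to the positive-definite function $g\mapsto |E\cap gE|/|E|$ and then splice in $\pi$ and the position embedding, and you must verify that the spliced map is still completely positive --- it is not literally a composition of two c.p.\ maps as written. Second, the lower bound $\|\Phi(x)\|\geq\|x\|-\varepsilon$ does not follow from anything you have stated: you need the transported element $\sum_g\rho(a_g)u_{\pi(g)}$ to have norm at least (approximately) that of $\sum_g a_g u_g$, which requires a separate argument (e.g.\ comparing the compressions to $\ell_2(E)$ and $\ell_2(\pi(E))$, again via the F\o lner property); at the approximate level you cannot invoke simplicity the way the paper does. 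Both points are repairable, so I would not call the approach wrong, but as written it is a program rather than a proof, and the paper's argument closes precisely these gaps by soft means. Your treatment of the ``in particular'' assertions is correct and agrees with the paper's.
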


\begin{proof}
Let $G = \bigcup_n F_n$ be a directed union of finite subsets. For every $n$, there are 
a group $H_n$ from $\mathrm{PQ}$ and an injective map $\pi_n\colon F_n\to H_n$ that satisfy the 
above-stated condition. 
We construct an embedding of $(\bigotimes_{G} M_2)\rtimes G$ into 
$\prod_n A_n/ \bigoplus_n A_n$ where $A_n=(\bigotimes_{H_n} M_2)\rtimes H_n$ and 
$\prod_n$ (resp.\ $\bigoplus_n$) denotes the $\ell_\infty$ (resp.\ $c_0$) 
direct sum of \Cs{}s. 
Each $A_n$ is quasidiagonal and therefore an MF algebra (in the sense of \cite{BlaKir}). It therefore follows from \cite[Corollary 3.4.3]{BlaKir} that the separable $\mathrm{C}^*$-algebra $(\bigotimes_G M_2) \rtimes G$ is an MF algebra, hence also an NF algebra (in the sense of \cite{BlaKir}) because it is nuclear, and hence quasidiagonal by \cite[Theorem 5.2.2]{BlaKir}. 

We first look at $\bigotimes_G M_2$. 
Let $a \in \bigcup_n(\bigotimes_{F_n}M_2)$ be given. 
Then $a \in \bigotimes_{E}M_2$ for some finite subset $E\subset G$. 
Take $N\in\N$ such that $E\subset F_N$. Then, for every $n\geq N$, the injective map $\pi_n$ 
induces a canonical $*$-homo\-morphism $\rho_n$ from $\bigotimes_{E}M_2$ 
into $\bigotimes_{H_n} M_2 \subset A_n$. Set $\rho_n(a) = 0$ when $n < N$ and 
define $\rho(a) \in \prod A_n/\bigoplus A_n$ by 
$\rho(a)=[(\rho_n(a))_{n=1}^\infty]$. Then $\rho$ extends to a $*$-homo\-mor\-phism 
from $\bigotimes_G M_2$ into $\prod A_n/\bigoplus A_n$. 
Next, for each $g\in G$, we define $u_g \in \prod A_n/\bigoplus A_n$ by 
$u_g=[(u_{\pi_n(g)})_{n=1}^\infty]$. The values $u_{\pi_n(g)}$ 
where they are not defined do not matter, and $u$ is a unitary representation 
of $G$ into $\prod A_n/\bigoplus A_n$. 
These representations are covariant and since $G$ is amenable, they give rise to 
a $*$-homo\-mor\-phism from the reduced crossed product $(\bigotimes_{G} M_2)\rtimes G$ 
into $\prod_n A_n/ \bigoplus_n A_n$. Since the former is simple (or since the canonical 
tracial states are compatible), it is a faithful embedding. 

The Grigorchuk group and the topological full groups of Cantor minimal systems 
are LEF and so satisfy the assumption of this proposition.
\end{proof}

\begin{remark} \label{remark} Observe that our proof that the class $\mathrm{PQ}$ is closed under extensions by $\Z$ relies on the classification theory for \Cs s. We do not know if the class QD is closed under extensions by $\Z$.  If it is, then one can obtain part (iv) of Theorem \ref{maintheorem} directly from Osin's theorem. 
\end{remark}

\begin{remark} Kerr and Nowak proved in \cite[Theorem 3.5]{KerrNowak} that the reduced crossed product $A \rtimes_\alpha G$ is quasidiagonal whenever $G$ is a countable group in QD, $A$ is a separable nuclear \Cs{} $A$, and  $\alpha$ is a \emph{quasidiagonal} action of $G$ on $A$ (in the sense of \cite[Definition 3.2]{KerrNowak}). It follows that a countable group $G$ in QD belongs to the class PQ if the Bernoulli action of $G$ on $\bigotimes_G M_2$ is quasidiagonal. The latter holds for example when  $G$ is residually finite. 
\end{remark}

\noindent
We conclude our paper remarking that our proofs imply the following corollary that may be of independent interest:

\begin{corollary}
For any  group $G$ in PQ there is an action $\alpha$ of $G$ on the universal UHF-algebra $\mathcal{U}$ such that $\mathcal{U}\rtimes_{\alpha} G$ is a simple A\T-algebra of real rank zero.
\end{corollary}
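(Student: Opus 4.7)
The plan is to derive the corollary by combining Theorem~\ref{Classification} with the tensorial absorption of UHF algebras by the universal UHF algebra $\mathcal{U}$. Given $G\in\mathrm{PQ}$, I would first verify that $B(G)$ satisfies the hypotheses of Theorem~\ref{Classification}: simplicity, monotraciality, nuclearity (using $\mathrm{PQ}\subseteq\mathrm{AG}$), and the UCT property all come from Proposition~\ref{functoriality}, while Remark~\ref{rem:B(G)} supplies quasidiagonality. Theorem~\ref{Classification} then gives that $B(G)\otimes\mathcal{U}$ is a simple A\T-algebra of real rank zero.

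Next, I would rewrite this tensor product as a crossed product of $\mathcal{U}$ by $G$. Since $B(G)=({\textstyle \bigotimes_G B})\rtimes_\sigma G$ and since the minimal tensor product commutes with the reduced crossed product when the second factor carries the trivial action,
\[
B(G)\otimes\mathcal{U}\;\cong\;\bigl(({\textstyle \bigotimes_G B})\otimes\mathcal{U}\bigr)\rtimes_{\sigma\otimes\id} G.
\]
The algebra $({\textstyle \bigotimes_G B})\otimes\mathcal{U}$ is UHF, and its supernatural number is absorbed by that of $\mathcal{U}$, so there is a $*$-isomorphism $\phi\colon({\textstyle \bigotimes_G B})\otimes\mathcal{U}\to\mathcal{U}$. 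Setting $\alpha_g=\phi\circ(\sigma_g\otimes\id)\circ\phi^{-1}$ produces an action $\alpha$ of $G$ on $\mathcal{U}$ for which
\[
\mathcal{U}\rtimes_\alpha G\;\cong\;\bigl(({\textstyle \bigotimes_G B})\otimes\mathcal{U}\bigr)\rtimes_{\sigma\otimes\id} G\;\cong\;B(G)\otimes\mathcal{U},
\]
a simple A\T-algebra of real rank zero.

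There is no serious obstacle here: the proof is essentially a matter of assembling ingredients already established in the paper---Proposition~\ref{functoriality}, Remark~\ref{rem:B(G)}, and Theorem~\ref{Classification}---together with the Glimm classification of UHF algebras and the routine compatibility of the reduced crossed product with the minimal tensor product under the trivial action. The only mild subtlety is that the action $\alpha$ on $\mathcal{U}$ is obtained only after transport of structure along $\phi$, but since we only need existence, any choice of $\phi$ works.
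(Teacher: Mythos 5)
Your proposal is correct and follows essentially the same route as the paper: apply Theorem~\ref{Classification} to $B(G)$ (whose hypotheses are supplied by Proposition~\ref{functoriality} and Remark~\ref{rem:B(G)}), identify $B(G)\otimes\mathcal{U}$ with $((\bigotimes_G B)\otimes\mathcal{U})\rtimes_{\sigma\otimes\id}G$, and note that $(\bigotimes_G B)\otimes\mathcal{U}\cong\mathcal{U}$. The paper's proof is just a terser version of the same argument, leaving the transport of the action along the isomorphism implicit.
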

\begin{proof}
It follows from Proposition~\ref{functoriality}, Theorem~\ref{maintheorem}, and Theorem~\ref{Classification} that 
$((\bigotimes_G B) \otimes\cU)\rtimes_{\sigma\otimes\id}G$ is a simple A\T-algebra of real rank zero. 
We note that $(\bigotimes_G B) \otimes\cU \cong \cU$. 
\end{proof}

\noindent Narutaka Ozawa \\
RIMS, Kyoto University\\
Sakyo-ku, Kyoto 606-8502\\
Japan\\
narutaka@kurims.kyoto-u.ac.jp\\

\noindent Mikael R\o rdam \\
Department of Mathematical Sciences\\
University of Copenhagen\\ 
Universitetsparken 5, DK-2100, Copenhagen \O\\
Denmark \\
rordam@math.ku.dk\\

\noindent Yasuhiko Sato \\
Graduate School of Science \\
Kyoto University \\
Sakyo-ku, Kyoto 606-8502\\ 
Japan \\
ysato@math.kyoto-u.ac.jp

\end{document}